\newtheoremstyle{theorem}
  {15pt}
  {15pt}
  {\sl}
  {\parindent}
  {\sc}
  {. }
  { }
  {}
\theoremstyle{theorem}
\newtheorem{theorem}{Theorem}[section]
\newtheoremstyle{defi}
  {15pt}
  {15pt}
  {\rm}
  {\parindent}
  {\sc}
  {. }
  { }
  {}
\theoremstyle{defi}
\newtheorem{definition}{Definition}[section]
\newcommand{\R}{\mathbb{R}}
\newcommand{\N}{\mathbb{N}}
\newcommand{\opbac}{D^{\beta,\alpha}_{1-\gamma}}
\newcommand{\caprb}{{{^C_xD}_b^\beta}}
\newcommand{\capla}{{{^C_aD}_x^\alpha}}
\newcommand{\intlb}{{_aI_x^{1-\beta}}}
\newcommand{\intra}{{_xI_b^{1-\alpha}}}
\title[Fractional calculus of variations \dots]{Fractional calculus of variations\\ [4pt]
for a combined Caputo derivative}
\author[A. B. Malinowska, D. F. M. Torres]{Agnieszka B. Malinowska $^1$, Delfim F. M. Torres $^2$}
\thanks{This is a preprint of a paper whose final and definite form has been published in:
Fract. Calc. Appl. Anal., Vol.~14, No~4 (2011), pp.~523--537; DOI: 10.2478/s13540-011-0032-6}
\begin{document}

\vbox to 2.5cm { \vfill }

\bigskip \medskip


\begin{abstract}

We generalize the fractional Caputo derivative to the fractional
derivative ${{^CD}^{\alpha,\beta}_{\gamma}}$,
which is a convex combination of the left Caputo
fractional derivative of order $\alpha$ and the right Caputo
fractional derivative of order $\beta$. The fractional variational
problems under our consideration are formulated
in terms of ${{^CD}^{\alpha,\beta}_{\gamma}}$.
The Euler--Lagrange equations for the basic and isoperimetric
problems, as well as transversality conditions, are proved.

\medskip

{\it MSC 2010\/}: Primary 26A33, Secondary 49K05.

\smallskip

{\it Key Words and Phrases}:
fractional derivatives,
Caputo derivatives,
fractional variational principles,
Euler--Lagrange equations,
isoperimetric constraints,
transversality conditions.

\end{abstract}


\maketitle

\vspace*{-16pt}


\section{Introduction}

The history of Fractional Calculus (FC) goes back more than three
centuries, when in 1695 the derivative of order $\alpha=1/2$ was
described by Leibniz. Since then, the new theory turned out to be
very attractive to mathematicians and many different forms of
fractional operators were introduced: the Grunwald--Letnikov,
Riemann--Liouville, Riesz, and Caputo fractional derivatives
(see, \textrm{e.g.}, \cite{kilbas,Podlubny,samko}),
and the more recent notions of \cite{klimek},
\cite{Cresson}, and \cite{Jumarie4,Jumarie3b}.
Besides mathematics, fractional
derivatives and integrals appear in physics, mechanics, engineering,
elasticity, dynamics, control theory, economics, biology, chemistry,
etc. (\textrm{cf.} \cite{ReviewerAsked02,MyID:179,D:D,ReviewerAsked01}
and references therein). The FC is nowadays covered by several books
(\textrm{e.g.}, \cite{hilfer2,kilbas,miller,Oldham,Podlubny,samko})
and a large number of relevant papers (see, \textrm{e.g.},
\cite{agrawalCap,comRic:Leitmann,Baleanu4,NonlinearDyn,Jumarie4,Jumarie3b,klimek,Rabei2,Tarasov}).

The calculus of variations is an old branch of optimization theory
that has many applications both in physics and geometry
(\textrm{cf.} \cite{MalTor,MR2164553,Trout,vanBrunt} and references therein).
Apart from a few examples known since ancient times such as Queen Dido's
problem (reported in {\it The Aeneid} by Virgil), the problem of
finding optimal curves and surfaces has been posed first by
physicists such as Newton, Huygens, and Galileo. Their contemporary
mathematicians, starting with the Bernoulli brothers and Leibniz,
followed by Euler and Lagrange, invented the calculus of
variations of a functional in order to solve those problems.
Fractional Calculus of Variations (FCV) unifies the calculus of
variations and the fractional calculus, by inserting fractional
derivatives into the variational integrals. This occurs naturally in many
problems of physics or mechanics, in order to provide more accurate
models of physical phenomena (see, \textrm{e.g.},
\cite{R:A:D:10,Atanackovic,ReviewerAsked02}).
The FCV started in 1996 with the work
of \cite{rie}. Riewe formulated the problem of the calculus of
variations with fractional derivatives and obtained the respective
Euler--Lagrange equations, combining both conservative and
nonconservative cases. Nowadays the FCV is a subject under strong
research. Different definitions for fractional derivatives and
integrals are used, depending on the purpose under study.
Investigations cover problems depending on Riemann--Liouville
fractional derivatives (see, \textrm{e.g.},
\cite{Agrawal:2002,Almeida2,Atanackovic,Baleanu:Mus,El-Nabulsi:Torres:2008,Frederico:Torres1}),
the Caputo fractional derivative (see, \textrm{e.g.},
\cite{agrawalCap,Baleanu:Agrawal,MalTor2010}),
the symmetric fractional derivative (see, \textrm{e.g.}, \cite{klimek}),
the Jumarie fractional derivative (see, \textrm{e.g.},
\cite{R:A:D:10,comRic:RL-modified,Jumarie4,Jumarie3b,Odz:Tor}), and others (see, \textrm{e.g.},
\cite{CD:Agrawal:2007,Cresson,El-Nabulsi:Torres07,withGasta:SI:Leit:85}).
Although the literature of FCV is already vast, much remains to be done.

In this paper we extend the notion of the Caputo fractional
derivative to the fractional derivative ${{^CD}^{\alpha,\beta}_{\gamma}}$, which is a convex
combination of the left Caputo fractional derivative of order $\alpha$ and
the right Caputo fractional derivative of order $\beta$. This idea
goes back at least as far as \cite{klimek}, where based on the
Riemann--Liouville fractional derivatives the symmetric fractional
derivative was introduced. Klimek's approach is obtained choosing
our parameter $\gamma$ to be $1/2$.
Although the symmetric fractional derivative of Riemann--Liouville
introduced by Klimek is a useful tool in the description of some nonconservative
models, this type of differentiation does not seems suitable for all
kinds of variational problems. The hypothesis that admissible
trajectories $y$ have continuous symmetric fractional derivatives
implies that $y(a)=y(b)=0$ (\textrm{cf}. \cite{Ross}). Therefore, the
advantage of the fractional derivative ${{^CD}^{\alpha,\beta}_{\gamma}}$
via Caputo lies in the fact that using this derivative
we can describe a more general class of
variational problems. It is also worth pointing out that our
fractional derivative ${{^CD}^{\alpha,\beta}_{\gamma}}$
allows to generalize the results
presented in \cite{agrawalCap}.

The text is organized as follows. Section~\ref{sec2} presents some
preliminaries. In Section~\ref{main:1} we introduce the fractional
derivative ${{^CD}^{\alpha,\beta}_{\gamma}}$
and provide the necessary concepts and results
needed in the sequel. Our main results are stated and proved in
Section~\ref{ssec:pro}. The fractional variational problems under
our consideration are formulated in terms of the fractional
derivative ${{^CD}^{\alpha,\beta}_{\gamma}}$.
We discuss the fundamental concepts of a
variational calculus such as the Euler--Lagrange equations for the
basic (Subsection~\ref{ssec:EL}) and isoperimetric
(Subsection~\ref{sec:iso}) problems, as well as transversality
conditions (Subsection~\ref{sec:tran}). We end with conclusions in
Section~\ref{Con}.


\section{Preliminaries}
\label{sec2}

In this section we review the necessary definitions and facts from
fractional calculus. For more on the subject we refer the reader to
\cite{kilbas,Oldham,Podlubny,samko}.
Let $f\in L_1([a,b])$  and $0<\alpha<1$. We begin with the left and
the right Riemann--Liouville Fractional Integrals (RLFI) of order
$\alpha$ of a function $f$. The left RLFI is defined by
\begin{equation}
\label{RLFI1} {_aI_x^\alpha}f(x)=\frac{1}{\Gamma(\alpha)}\int_a^x
(x-t)^{\alpha-1}f(t)dt,\quad x\in[a,b],
\end{equation}
while the right RLFI is given by
\begin{equation}
\label{RLFI2}
{_xI_b^\alpha}f(x)=\frac{1}{\Gamma(\alpha)}\int_x^b(t-x)^{\alpha-1}
f(t)dt,\quad x\in[a,b],
\end{equation}
where $\Gamma(\cdot)$ represents the Gamma function. Moreover,
${_aI_x^0}f={_xI_b^0}f=f$ if $f$ is a continuous function.

Let $f\in AC([a,b])$, where $AC([a,b])$ represents the space of
absolutely continuous functions on $[a,b]$. Using equations
\eqref{RLFI1} and \eqref{RLFI2}, we define the left and the right
Riemann--Liouville and Caputo derivatives as follows. The left
Riemann--Liouville Fractional Derivative (RLFD) is given by
\begin{equation}
\label{RLFD1}
\begin{split}
{_aD_x^\alpha}f(x)&=\frac{1}{\Gamma(1-\alpha)}\frac{d}{dx}
\int_a^x (x-t)^{-\alpha}f(t)dt\\
&=\frac{d}{dx}{_aI_x^{1-\alpha}}f(x),\quad x\in[a,b],
\end{split}
\end{equation}
the right RLFD by
\begin{equation}
\label{RLFD2}
\begin{split}
{_xD_b^\alpha}f(x)
&=\frac{-1}{\Gamma(1-\alpha)}\frac{d}{dx}
\int_x^b (t-x)^{-\alpha} f(t)dt\\
&=\left(-\frac{d}{dx}\right){_xI_b^{1-\alpha}}f(x),
\quad x\in[a,b],
\end{split}
\end{equation}
the left Caputo Fractional Derivative (CFD) is defined by
\begin{equation}
\label{CFD1}
\begin{split}
{^C_aD_x^\alpha}f(x)
&=\frac{1}{\Gamma(1-\alpha)}
\int_a^x (x-t)^{-\alpha}\frac{d}{dt}f(t)dt\\
&={_aI_x^{1-\alpha}}\frac{d}{dx}f(x),
\quad x\in[a,b],
\end{split}
\end{equation}
and the right CFD by
\begin{equation}
\label{CFD2}
\begin{split}
{^C_xD_b^\alpha}f(x)
&=\frac{-1}{\Gamma(1-\alpha)}
\int_x^b (t-x)^{-\alpha} \frac{d}{dt}f(t)dt\\
&={_xI_b^{1-\alpha}}\left(-\frac{d}{dx}\right)f(x),
\quad x\in[a,b],
\end{split}
\end{equation}
where $\alpha$ is the order of the derivative.
The operators \eqref{RLFI1}--\eqref{CFD2} are obviously linear. We
now present the rule of fractional integration by parts for RLFI
(see for instance \cite{int:partsRef}). Let $0<\alpha<1$, $p\geq1$,
$q \geq 1$, and $1/p+1/q\leq1+\alpha$. If $g\in L_p([a,b])$ and
$f\in L_q([a,b])$, then
\begin{equation}
\label{ipi}
\int_{a}^{b} g(x){_aI_x^\alpha}f(x)dx
=\int_a^b f(x){_x I_b^\alpha} g(x)dx.
\end{equation}
In the discussion to follow, we will also need the following
formulae for fractional integrations by parts:
\begin{equation}
\label{ip}
\begin{gathered}
\int_{a}^{b} g(x) \, {^C_aD_x^\alpha}f(x)dx
=\left.f(x){_x I_b^{1-\alpha}} g(x)\right|^{x=b}_{x=a}
+\int_a^b f(x){_x D_b^\alpha} g(x)dx,\\
\int_{a}^{b} g(x) \, {^C_xD_b^\alpha}f(x)dx
=\left.-f(x){_a I_x^{1-\alpha}} g(x)\right|^{x=b}_{x=a}
+\int_a^b f(x){_a D_x^\alpha} g(x)dx.
\end{gathered}
\end{equation}

They can be easily derived using equations \eqref{RLFD1}--\eqref{CFD2},
the identity \eqref{ipi}, and performing integration by parts.


\section{The fractional ${{^CD}^{\alpha,\beta}_{\gamma}}$ operator}
\label{main:1}

Let  $\alpha, \beta \in(0,1)$ and $\gamma\in [0,1]$.
Motivated by the diamond-alpha operator
used in time scales (see, \textrm{e.g.},
\cite{MR2562284,MR2410768}),
we define the fractional derivative operator
$^CD^{\alpha,\beta}_{\gamma}$ by
\begin{equation}
\label{op}
{{^CD}^{\alpha,\beta}_{\gamma}} =\gamma \capla + (1-\gamma) \caprb \, ,
\end{equation}
which acts on $f\in AC([a,b])$ in the following way:
\begin{equation*}
{{^CD}^{\alpha,\beta}_{\gamma}} f(x)=\gamma \capla f(x) + (1-\gamma) \caprb f(x).
\end{equation*}
Note that
\begin{equation*}
^CD^{\alpha,\beta}_{0} f(x)=\caprb f(x),
\quad ^CD^{\alpha,\beta}_{1} f(x)=\capla f(x).
\end{equation*}

For $\mathbf{f}=[f_1,\ldots,f_N]:[a,b]\rightarrow \R^{N}$,
($N\in\N$) and $f_i \in AC([a,b])$, $i=1,\ldots,N$, we have
$$
{{^CD}^{\alpha,\beta}_{\gamma}} \mathbf{f}(x)
=[{{^CD}^{\alpha,\beta}_{\gamma}} f_1(x),
\ldots,{{^CD}^{\alpha,\beta}_{\gamma}} f_N(x)].
$$

The operator \eqref{op} is obviously linear. Using equations
\eqref{ip} it is easy to derive the following rule of fractional
integration by parts for ${{^CD}^{\alpha,\beta}_{\gamma}}$:
\begin{multline}
\label{byparts}
\int_{a}^{b}  g(x) \, {{^CD}^{\alpha,\beta}_{\gamma}} f(x)dx
=\gamma\left[f(x)\intra g(x)\right]^{x=b}_{x=a}\\
+(1-\gamma)\left[-f(x)\intlb g(x)\right]^{x=b}_{x=a}\\
+\int_a^b f(x)\opbac g(x)dx,
\end{multline}
where $\opbac = (1-\gamma) {_a}D_x^\beta + \gamma \, {_x}D_b^\alpha$,
which acts on $f$ as
$$
\opbac f(x) = (1-\gamma) {_a}D_x^\beta f(x) + \gamma \, {_x}D_b^\alpha f(x) \, .
$$

Let $\mathbf{D}$ denote the set of all functions
$\mathbf{y}:[a,b]\rightarrow \R^{N}$ such that
${{^CD}^{\alpha,\beta}_{\gamma}} \mathbf{y}$ exists and is continuous on the
interval $[a,b]$. We endow $\mathbf{D}$ with the following norm:
\begin{equation*}
    \|\mathbf{y}\|_{1,\infty}:=\max_{a\leq x \leq
    b}\|\mathbf{y}(x)\|+\max_{a\leq x \leq
    b}\|{{^CD}^{\alpha,\beta}_{\gamma}}\mathbf{y}(x)\|,
\end{equation*}
where $\|\cdot\|$ stands for a norm in $\R^N$.

Along the work we denote by $\partial_i K$, $i=1,\ldots,M$ ($M\in
\N$), the partial derivative of function $K:\R^M\rightarrow \R$
with respect to its $i$th argument.

Let $\lambda \in \R^r$. For simplicity of notation
we introduce the operators $[\cdot]$
and $\{\cdot\}_\lambda$ defined by
\begin{equation*}
\begin{split}
[\mathbf{y}](x) &= \left(x,\mathbf{y}(x),{{^CD}^{\alpha,\beta}_{\gamma}}
\mathbf{y}(x)\right) \, , \\
\{\mathbf{y}\}_\lambda(x) &= \left(x,\mathbf{y}(x),{{^CD}^{\alpha,\beta}_{\gamma}}
\mathbf{y}(x),\lambda_1,\ldots,\lambda_r\right) \, .
\end{split}
\end{equation*}


\section{Calculus of variations via ${{^CD}^{\alpha,\beta}_{\gamma}}$}
\label{ssec:pro}

We are concerned with the problem of finding minima (or maxima) of a functional
$\mathcal{J}: \mathcal{D}\rightarrow \R$, where $ \mathcal{D}$ is a
subset of $\mathbf{D}$. The formulation of a problem of
calculus of variations requires two steps: the specification of a
performance criterion and the statement of physical constraints that
should be satisfied.

A performance criterion $\mathcal{J}$, also called cost functional
(or cost), must be specified for evaluating the performance of a
system quantitatively. We consider the following cost:
\begin{equation*}
\mathcal{J}(\mathbf{y})=\int_a^b L[\mathbf{y}](x) \, dx,
\end{equation*}
where $x\in [a,b]$ is the independent variable, often called time;
$\mathbf{y}(x)\in \R^N$ is a real vector variable, the functions
$\mathbf{y}$ are generally called trajectories or curves;
${{^CD}^{\alpha,\beta}_{\gamma}}
\mathbf{y}(x)\in \R^N$ stands for the fractional derivative of
$\mathbf{y}(x)$; and $L\in C^1([a,b]\times\mathbb{R}^{2N};
\mathbb{R})$ is called a Lagrangian.

Enforcing constraints in the optimization problem reduces the set of
candidate functions and leads to the following definition.

\begin{definition}
A trajectory $\mathbf{y}\in \mathbf{D}$ is said to be an admissible
trajectory provided that it satisfies all of the constraints along
interval $[a,b]$. The set of admissible trajectories is defined as
\begin{equation*}
\mathcal{D}:=\{\mathbf{y}\in \mathbf{D}:\mathbf{y}
\mbox{ is admissible}\}.
\end{equation*}
\end{definition}

We now define what is meant by a minimizer
of $\mathcal{J}$ on $\mathcal{D}$.

\begin{definition}
A trajectory $\bar{\mathbf{y}}\in \mathcal{D}$ is said to be a local
minimizer for $\mathcal{J}$ on $\mathcal{D}$ if there exists
$\delta>0$ such that $\mathcal{J}(\bar{\mathbf{y}})\leq
\mathcal{J}(\mathbf{y})$ for all $\mathbf{y}\in \mathcal{D}$ with
$\|\mathbf{y}-\bar{\mathbf{y}}\|_{1,\infty}<\delta$.
\end{definition}

The concept of variation of a functional
is central to the solution of problems
of the calculus of variations.

\begin{definition}
The first variation of $\mathcal{J}$ at $\mathbf{y}\in \mathbf{D}$
in the direction $\mathbf{h}\in \mathbf{D}$ is defined as
\begin{equation*}
\delta\mathcal{J}(\mathbf{y};\mathbf{h})
:=\lim_{\varepsilon\rightarrow 0}\frac{\mathcal{J}(\mathbf{y}
+\varepsilon\mathbf{h})-\mathcal{J}(\mathbf{y})}{\varepsilon}
=\left.\frac{\partial}{\partial\varepsilon}\mathcal{J}(\mathbf{y}
+\varepsilon\mathbf{h})\right|_{\varepsilon=0}
\end{equation*}
provided the limit exists.
\end{definition}

\begin{definition}
A direction $\mathbf{h}\in \mathbf{D}$, $\mathbf{h}\neq 0$, is said
to be an admissible variation at $\mathbf{y}\in \mathcal{D}$ for
$\mathcal{J}$ if
\begin{itemize}
\item[(i)] $\delta\mathcal{J}(\mathbf{y};\mathbf{h})$ exists; and
\item[(ii)] $\mathbf{y}+\varepsilon\mathbf{h}\in \mathcal{D}$ for
all sufficiently small $\varepsilon$.
\end{itemize}
\end{definition}

The following well known result (see, \textrm{e.g.},
\cite[Proposition~5.5]{Trout}) offers a necessary
optimality condition for problems of calculus of variations
based on the concept of variations.

\begin{theorem}[\cite{Trout}]
\label{nesse_con}
Let $\mathcal{J}$ be a functional defined on
$\mathcal{D}$. Suppose that $\mathbf{y}$ is a local minimizer for
$\mathcal{J}$ on $\mathcal{D}$. Then,
$\delta\mathcal{J}(\mathbf{y};\mathbf{h})=0$ for each admissible
variation $\mathbf{h}$ in $\mathbf{y}$.
\end{theorem}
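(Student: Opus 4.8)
The plan is to reduce the assertion to the elementary fact that a differentiable function of one real variable has vanishing derivative at an interior local minimum. First I would fix an arbitrary admissible variation $\mathbf{h}$ at $\mathbf{y}$ and introduce the scalar function $\phi(\varepsilon):=\mathcal{J}(\mathbf{y}+\varepsilon\mathbf{h})$, considered for $\varepsilon$ ranging over a neighbourhood of $0$. Condition (ii) in the definition of admissible variation guarantees that $\mathbf{y}+\varepsilon\mathbf{h}\in\mathcal{D}$ for all sufficiently small $\varepsilon$, so $\phi$ is well defined near $0$; condition (i) guarantees that $\phi$ is differentiable at $\varepsilon=0$ with $\phi'(0)=\delta\mathcal{J}(\mathbf{y};\mathbf{h})$, directly from the definition of the first variation.

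Next I would bring in the local-minimizer hypothesis. Since $\mathbf{y}$ is a local minimizer, there is $\delta>0$ such that $\mathcal{J}(\mathbf{y})\leq\mathcal{J}(\mathbf{z})$ whenever $\mathbf{z}\in\mathcal{D}$ and $\|\mathbf{z}-\mathbf{y}\|_{1,\infty}<\delta$. Taking $\mathbf{z}=\mathbf{y}+\varepsilon\mathbf{h}$ and using linearity of the operator ${{^CD}^{\alpha,\beta}_{\gamma}}$ to compute $\|\mathbf{z}-\mathbf{y}\|_{1,\infty}=|\varepsilon|\,\|\mathbf{h}\|_{1,\infty}$, I observe that for every $\varepsilon$ with $|\varepsilon|<\delta/\|\mathbf{h}\|_{1,\infty}$ small enough that $\mathbf{z}\in\mathcal{D}$, we have $\phi(\varepsilon)=\mathcal{J}(\mathbf{z})\geq\mathcal{J}(\mathbf{y})=\phi(0)$. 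Here the divisor is positive because $\mathbf{h}\neq 0$. Consequently $\varepsilon=0$ is an interior local minimum of the single-variable function $\phi$.

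Finally, since $\phi$ attains a local minimum at the interior point $\varepsilon=0$ and $\phi'(0)$ exists, the classical first-order necessary condition yields $\phi'(0)=0$, that is, $\delta\mathcal{J}(\mathbf{y};\mathbf{h})=0$. As $\mathbf{h}$ was an arbitrary admissible variation, the claim follows.

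As for difficulty, there is no genuine analytic obstacle here: the argument is a direct transcription of the one-dimensional necessary condition, and nothing specific to the fractional setting intervenes. The only point requiring mild care is the bookkeeping with the norm $\|\cdot\|_{1,\infty}$ — one must verify that the $\varepsilon$-scaling of the perturbation simultaneously controls the sup-norm of $\mathbf{y}$ and that of its combined Caputo derivative — but this is immediate from the linearity of ${{^CD}^{\alpha,\beta}_{\gamma}}$ and the homogeneity of the norm in $\varepsilon$.
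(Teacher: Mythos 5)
Your proof is correct: the paper itself states this theorem without proof, citing it as \cite[Proposition~5.5]{Trout}, and your argument is precisely the standard one behind that citation --- reduce to the scalar function $\phi(\varepsilon)=\mathcal{J}(\mathbf{y}+\varepsilon\mathbf{h})$, note that conditions (i) and (ii) of admissibility make $\phi$ well defined near $0$ and differentiable at $0$ with $\phi'(0)=\delta\mathcal{J}(\mathbf{y};\mathbf{h})$, and apply Fermat's interior-minimum condition. The norm bookkeeping $\|\varepsilon\mathbf{h}\|_{1,\infty}=|\varepsilon|\,\|\mathbf{h}\|_{1,\infty}$ and the positivity of $\|\mathbf{h}\|_{1,\infty}$ for $\mathbf{h}\neq 0$ are handled correctly, so nothing is missing.
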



\subsection{Elementary problem of the ${{^CD}^{\alpha,\beta}_{\gamma}}$ calculus of variations}
\label{ssec:EL}

Let us begin with the following problem:
\begin{equation}
\label{Funct1}
\mathcal{J}(\mathbf{y})
=\int_a^b L[\mathbf{y}](x) \, dx \longrightarrow \min\\
\end{equation}
over all $\mathbf{y}\in \mathbf{D}$ satisfying the boundary conditions
\begin{equation}
\label{boun2}
\mathbf{y}(a)=\mathbf{y}^{a},
\quad \mathbf{y}(b)=\mathbf{y}^{b},
\quad \mathbf{y}^{a},\mathbf{y}^{b}\in \R^N.
\end{equation}

Next theorem gives the fractional Euler--Lagrange equation for the
problem \eqref{Funct1}--\eqref{boun2}.

\begin{theorem}
\label{Theo E-L1}
Let $\mathbf{y}=(y_1,\ldots,y_N)$ be a local
minimizer to problem \eqref{Funct1}--\eqref{boun2}. Then,
$\mathbf{y}$ satisfies the system of
$N$ fractional Euler--Lagrange equations
\begin{equation}
\label{E-L1}
\partial_iL[\mathbf{y}](x)+\opbac
\partial_{N+i}L[\mathbf{y}](x)=0, \quad
i=2,\ldots N+1,
\end{equation}
for all $x\in[a,b]$.
\end{theorem}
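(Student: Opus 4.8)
The plan is to derive the Euler--Lagrange equations \eqref{E-L1} via the standard variational machinery, now adapted to the combined Caputo operator. Since $\mathbf{y}$ is a local minimizer satisfying the fixed boundary conditions \eqref{boun2}, any admissible variation $\mathbf{h}=(h_1,\ldots,h_N)\in\mathbf{D}$ must vanish at the endpoints, i.e. $\mathbf{h}(a)=\mathbf{h}(b)=0$, so that $\mathbf{y}+\varepsilon\mathbf{h}$ continues to meet \eqref{boun2}. By Theorem~\ref{nesse_con}, the first variation must vanish: $\delta\mathcal{J}(\mathbf{y};\mathbf{h})=0$ for every such $\mathbf{h}$.

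First I would compute the first variation explicitly. Differentiating $\mathcal{J}(\mathbf{y}+\varepsilon\mathbf{h})$ under the integral sign and evaluating at $\varepsilon=0$, the chain rule together with the linearity of ${{^CD}^{\alpha,\beta}_{\gamma}}$ gives
\begin{equation*}
\delta\mathcal{J}(\mathbf{y};\mathbf{h})
=\int_a^b \sum_{i=1}^{N}\left(\partial_{i+1}L[\mathbf{y}](x)\,h_i(x)
+\partial_{N+i+1}L[\mathbf{y}](x)\,{{^CD}^{\alpha,\beta}_{\gamma}} h_i(x)\right)dx.
\end{equation*}
The justification for differentiating under the integral relies on the $C^1$ regularity of $L$ and the continuity assumptions built into $\mathbf{D}$; I would note this but not belabor it.

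The decisive step is to eliminate the fractional derivative acting on $h_i$ by applying the fractional integration by parts formula \eqref{byparts} to each term $\partial_{N+i+1}L[\mathbf{y}](x)\,{{^CD}^{\alpha,\beta}_{\gamma}} h_i(x)$, with $g=\partial_{N+i+1}L[\mathbf{y}]$ and $f=h_i$. Because $h_i(a)=h_i(b)=0$, both boundary contributions in \eqref{byparts} vanish, leaving only the bulk term involving the adjoint operator $\opbac$. After this transfer the first variation becomes
\begin{equation*}
\delta\mathcal{J}(\mathbf{y};\mathbf{h})
=\int_a^b \sum_{i=1}^{N}\left(\partial_{i+1}L[\mathbf{y}](x)
+\opbac\,\partial_{N+i+1}L[\mathbf{y}](x)\right)h_i(x)\,dx=0.
\end{equation*}
Here the main obstacle is ensuring the integrability hypotheses of \eqref{byparts} are actually satisfied, so that the vanishing of the boundary terms and the appearance of $\opbac$ are legitimate; this is where the regularity of $L$ and its partials along $[\mathbf{y}]$ must be invoked.

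Finally I would conclude by the fundamental lemma of the calculus of variations. Since the displayed identity holds for every admissible variation $\mathbf{h}$, and since the components $h_i$ may be chosen independently (vanishing at the endpoints but otherwise arbitrary), one may select each $h_j$ nonzero with the others zero, reducing to a single integral against an arbitrary test function. Invoking the du~Bois-Reymond argument component by component forces each bracketed expression to vanish identically on $[a,b]$, which is precisely the system \eqref{E-L1} after the index shift $i\mapsto i+1$ used in the statement. This completes the derivation.
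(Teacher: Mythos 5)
Your proposal is correct and follows essentially the same route as the paper's own proof: invoke Theorem~\ref{nesse_con} to make the first variation vanish, compute it explicitly, transfer the fractional derivative off the variation $\mathbf{h}$ via the integration by parts formula \eqref{byparts}, kill the boundary terms using $\mathbf{h}(a)=\mathbf{h}(b)=0$, and conclude with the fundamental lemma of the calculus of variations. The only differences are cosmetic (your index shift $i\mapsto i+1$, and your explicit remarks on differentiating under the integral sign and on the componentwise du~Bois-Reymond argument, which the paper leaves implicit).
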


\begin{proof}
Suppose that $\mathbf{y}$ is a local minimizer for $\mathcal{J}$.
Let $\mathbf{h}$ be an arbitrary admissible variation for problem
\eqref{Funct1}--\eqref{boun2}, \textrm{i.e.}, $h_i(a)=h_i(b)=0$,
$i=1,\ldots,N$. Based on the differentiability properties of $L$ and
Theorem~\ref{nesse_con}, a necessary condition for $\mathbf{y}$ to be
a local minimizer is given by
$$
\left.\frac{\partial}{\partial\varepsilon}\mathcal{J}(\mathbf{y}
+\varepsilon\mathbf{h})\right|_{\varepsilon=0} = 0 \, ,
$$
that is,
\begin{multline}
\label{eq:FT}
\int_a^b\Biggl[\sum_{i=2}^{N+1}\partial_iL[\mathbf{y}](x)h_{i-1}(x)\\
+\sum_{i=2}^{N+1}\partial_{N+i}L[\mathbf{y}](x){{^CD}^{\alpha,\beta}_{\gamma}}
h_{i-1}(x)\Biggr]dx=0.
\end{multline}
Using formula \eqref{byparts} for integration by parts in the
second term of the integrand function, we get
\begin{multline}
\label{eq:aft:IP}
\int_a^b\left[\sum_{i=2}^{N+1}\partial_i
L[\mathbf{y}](x)+\opbac\partial_{N+i}L[\mathbf{y}](x)\right]h_{i-1}(x)dx\\
+\gamma\left.\left[\sum_{i=2}^{N+1}h_{i-1}(x)\intra
\partial_{N+i}L[\mathbf{y}](x)\right]\right|^{x=b}_{x=a}\\
-(1-\gamma)\left.\left[\sum_{i=2}^{N+1}h_{i-1}(x)\intlb
\partial_{N+i}L[\mathbf{y}](x)\right]\right|^{x=b}_{x=a}=0.
\end{multline}
Since $h_i(a)=h_i(b)=0$, $i=1,\ldots,N$, by the fundamental lemma of
the calculus of variations we deduce that
\begin{equation*}
\partial_iL[\mathbf{y}](x)+\opbac\partial_{N+i}L[\mathbf{y}](x)=0,
\quad i=2,\ldots,N+1,
\end{equation*}
for all $x\in[a,b]$.
\end{proof}

Observe that if $\alpha$ and $\beta$ go to $1$, then $\capla$ and
${_a}D_x^\alpha$ can be replaced with $\frac{d}{dx}$;
and $\caprb$ and ${_x}D_b^\beta$  with $-\frac{d}{dx}$
(see, \textrm{e.g.}, \cite{Podlubny}).
Thus, if $\gamma=1$ or $\gamma=0$,
then for $\alpha,\beta \rightarrow 1$ we obtain
a corresponding result in the classical context
of the calculus of variations (see, \textrm{e.g.}, \cite{Trout}).


\subsection{${{^CD}^{\alpha,\beta}_{\gamma}}$ transversality conditions}
\label{sec:tran}

Let $l\in \{1,\ldots,N\}$. Assume that $\mathbf{y}(a)=\mathbf{y}^a$,
$y_i(b)=y_i^b$, $i=1,\ldots,N$, $i\neq l$, but $y_l(b)$ is free.
Then, $h_l(b)$ is free and by equations \eqref{E-L1} and
\eqref{eq:aft:IP} we obtain
\begin{equation}
\label{new:bcb}
\Bigl[\gamma \intra \partial_{l+1}L[\mathbf{y}](x)
\left.-(1-\gamma) \intlb \partial_{N+1+l}L[\mathbf{y}](x)\Bigr]\right|_{x=b}=0.
\end{equation}

Let us consider now the case when $\mathbf{y}(a)=\mathbf{y}^a$,
$y_i(b)=y_i^b$, $i=1,\ldots,N$, $i\neq l$, and  $y_l(b)$ is free
but restricted by a terminal condition $y_l(b)\leq y^{b}_l$. Then, in
the optimal solution $\mathbf{y}$ we have two possible types of
outcome: $y_l(b)< y^{b}_l$ or $y_l(b)= y^{b}_l$. If $y(b)<
y^{b}_l$, then there are admissible neighboring paths with terminal
value both above and below $y_l(b)$, so that $h_l(b)$ can take
either sign. Therefore, the transversality conditions is
\begin{equation}
\label{tran:1}
\Bigl[\gamma \intra \partial_{l+1}L[\mathbf{y}](x)
\left.-(1-\gamma) \intlb \partial_{n+1+l}L[\mathbf{y}](x)\Bigr]\right|_{x=b}=0
\end{equation}
for $y_l(b)< y_l^{b}$. The other outcome $y_l(b)= y^{b}_l$
only admits the neighboring paths with terminal value
$\tilde{y}_l(b) \leq y_l(b)$. Assuming, without loss of generality,
that $h_l(b)\geq 0$, this means that $\varepsilon \leq 0$.
Hence, the transversality condition, which has
it root in the first order condition \eqref{eq:FT}, must be changed
to the inequality. For a minimization problem, the $\leq$ type of
inequality is called for, and we obtain
\begin{equation}
\label{tran:2}
\Bigl[\gamma \intra \partial_{l+1} L[\mathbf{y}](x)
\left.-(1-\gamma) \intlb \partial_{N+1+l}
L[\mathbf{y}](x)\Bigr]\right|_{x=b}\leq0
\end{equation}
for $y_l(b)= y^{b}_l$. Combining \eqref{tran:1}
and \eqref{tran:2}, we may write
the following transversality
condition for a minimization problem:
\begin{gather*}
\label{tran}
\Bigl[\gamma \intra \partial_{l+1}L[\mathbf{y}](x)
\left.-(1-\gamma) \intlb \partial_{N+1+l}
L[\mathbf{y}](x)\Bigr]\right|_{x=b}\leq0,\\
y_l(b)\leq y^{b}_l,\\
(y_l(b)-y^{b}_l)\Bigl[\gamma \intra\partial_{l+1}L[\mathbf{y}](x)
\left.-(1-\gamma) \intlb \partial_{N+1+l}
L[\mathbf{y}](x)\Bigr]\right|_{x=b}=0.
\end{gather*}


\subsection{The ${{^CD}^{\alpha,\beta}_{\gamma}}$ isoperimetric problem}
\label{sec:iso}

Let us consider now the isoperimetric problem that consists of
minimizing \eqref{Funct1} over all $\mathbf{y}\in \mathbf{D}$
satisfying $r$ isoperimetric constraints
\begin{equation}
\label{cons2:iso}
\mathcal{G}^j(\mathbf{y})=\int_{a}^{b} G^j[\mathbf{y}](x)dx=l_j,
\quad j=1,\ldots,r,
\end{equation}
where $G^j\in C^1([a,b]\times\mathbb{R}^{2N}; \mathbb{R})$,
$j=1,\ldots,r$, and boundary conditions \eqref{boun2}.
Necessary optimality conditions for isoperimetric problems can
be obtained by the following general theorem (see, \textrm{e.g.},
\cite[Theorem~5.16]{Trout}).

\begin{theorem}
\label{nes:iso}
Let $\mathcal{J},\mathcal{G}^{1},\ldots,\mathcal{G}^r$ be
functionals defined in a neighborhood of $\mathbf{y}$ and having
continuous first variations in this neighborhood. Suppose that
$\mathbf{y}$ is a local minimizer of \eqref{Funct1} subject to the
boundary conditions \eqref{boun2} and the isoperimetric constrains
\eqref{cons2:iso}.
Then, either:\\
(i) for all $\mathbf{h}^{j}\in \mathbf{D}$, $j=1,\ldots,r$,
\begin{equation}
\label{lmi}
\left|
  \begin{array}{cccc}
    \delta\mathcal{G}^{1}(\mathbf{y};\mathbf{h}^{1}) &
    \delta\mathcal{G}^{1}(\mathbf{y};\mathbf{h}^{2})&
    \cdots & \delta\mathcal{G}^{1}(\mathbf{y};\mathbf{h}^{r})\\
    \delta\mathcal{G}^{2}(\mathbf{y};\mathbf{h}^{1})
    & \delta\mathcal{G}^{2}(\mathbf{y};\mathbf{h}^{2})
    & \cdots & \delta\mathcal{G}^{2}(\mathbf{y};\mathbf{h}^{r}) \\
    \vdots & \vdots& \ddots & \vdots \\
    \delta\mathcal{G}^{r}(\mathbf{y};\mathbf{h}^{1})
    & \delta\mathcal{G}^{r}(\mathbf{y};\mathbf{h}^{2})
    & \cdots & \delta\mathcal{G}^{r}(\mathbf{y};\mathbf{h}^{r})\\
  \end{array}
\right|=0
\end{equation}
or\\
(ii) there exist constants $\lambda_{j}\in \R$,
$j=1,\ldots,r$, for which
\begin{equation*}
\delta\mathcal{J}(\mathbf{y};\mathbf{h})
=\sum_{j=1}^{m}\lambda_{j}\delta\mathcal{G}^{j}(\mathbf{y};\mathbf{h})
\quad \forall \mathbf{h} \in \mathbf{D}.
\end{equation*}
\end{theorem}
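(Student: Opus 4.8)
The plan is to prove this as the abstract Lagrange multiplier rule, establishing the dichotomy in contrapositive form: assuming that alternative (i) fails, I would construct the multipliers $\lambda_j$ of alternative (ii). The first observation is that condition (i) — the vanishing of the determinant \eqref{lmi} for \emph{every} choice of directions $\mathbf{h}^1,\ldots,\mathbf{h}^r$ — is precisely the statement that the $r$ linear functionals $\mathbf{h}\mapsto\delta\mathcal{G}^j(\mathbf{y};\mathbf{h})$, $j=1,\ldots,r$, are linearly dependent on $\mathbf{D}$. Indeed, if they are dependent then the rows of the matrix in \eqref{lmi} are always dependent and the determinant vanishes identically; conversely, if they are independent one can select a family $\mathbf{h}^1,\ldots,\mathbf{h}^r$ for which the matrix $M:=\bigl[\delta\mathcal{G}^j(\mathbf{y};\mathbf{h}^k)\bigr]_{j,k=1}^r$ is nonsingular. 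Hence, assuming (i) fails, I may fix such an invertible $M$ and must produce the $\lambda_j$.

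Next I would perform a finite-dimensional reduction. Fix an arbitrary direction $\mathbf{h}=\mathbf{h}^0\in\mathbf{D}$ and consider the $(r+1)$-parameter family $\hat{\mathbf{y}}(\varepsilon_0,\ldots,\varepsilon_r)=\mathbf{y}+\sum_{k=0}^r\varepsilon_k\mathbf{h}^k$, where the $\mathbf{h}^k$ are chosen to preserve admissibility (so that, in the application to \eqref{Funct1}--\eqref{boun2}--\eqref{cons2:iso}, they vanish at the endpoints and \eqref{boun2} is retained). Define $\phi_j(\varepsilon_0,\ldots,\varepsilon_r)=\mathcal{G}^j(\hat{\mathbf{y}})-l_j$ for $j=1,\ldots,r$. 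Because the functionals have continuous first variations in a neighborhood of $\mathbf{y}$, each $\phi_j$ is of class $C^1$ near the origin, with $\phi_j(0)=0$ and $\partial\phi_j/\partial\varepsilon_k=\delta\mathcal{G}^j(\mathbf{y};\mathbf{h}^k)$ at the origin. Since the Jacobian of $(\phi_1,\ldots,\phi_r)$ with respect to $(\varepsilon_1,\ldots,\varepsilon_r)$ at the origin is exactly $M$, which is invertible, the implicit function theorem yields $C^1$ functions $\varepsilon_k=\varepsilon_k(\varepsilon_0)$, $k=1,\ldots,r$, with $\varepsilon_k(0)=0$, along which $\phi_j\equiv0$. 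This gives a one-parameter family of admissible trajectories, satisfying \eqref{cons2:iso}, passing through $\mathbf{y}$ at $\varepsilon_0=0$.

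The conclusion then follows by exploiting minimality along this curve. Differentiating $\phi_j(\varepsilon_0,\varepsilon_1(\varepsilon_0),\ldots,\varepsilon_r(\varepsilon_0))=0$ at $\varepsilon_0=0$ gives $\delta\mathcal{G}^j(\mathbf{y};\mathbf{h})+\sum_{k=1}^r\varepsilon_k'(0)\,\delta\mathcal{G}^j(\mathbf{y};\mathbf{h}^k)=0$, while the fact that $\varepsilon_0\mapsto\mathcal{J}(\hat{\mathbf{y}})$ has a local minimum at $\varepsilon_0=0$ gives $\delta\mathcal{J}(\mathbf{y};\mathbf{h})+\sum_{k=1}^r\varepsilon_k'(0)\,\delta\mathcal{J}(\mathbf{y};\mathbf{h}^k)=0$. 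I would then \emph{define} $\lambda=(\lambda_1,\ldots,\lambda_r)$ as the unique solution of the linear system $\delta\mathcal{J}(\mathbf{y};\mathbf{h}^k)=\sum_{j=1}^r\lambda_j\,\delta\mathcal{G}^j(\mathbf{y};\mathbf{h}^k)$, $k=1,\ldots,r$, solvable because its coefficient matrix is $M$. Substituting these relations into the minimality identity and using the differentiated constraint identities $\sum_{k=1}^r\varepsilon_k'(0)\,\delta\mathcal{G}^j(\mathbf{y};\mathbf{h}^k)=-\delta\mathcal{G}^j(\mathbf{y};\mathbf{h})$ collapses everything to $\delta\mathcal{J}(\mathbf{y};\mathbf{h})=\sum_{j=1}^r\lambda_j\,\delta\mathcal{G}^j(\mathbf{y};\mathbf{h})$. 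As $\mathbf{h}$ was arbitrary, this is exactly alternative (ii).

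The step I expect to be the main obstacle is the rigorous justification of the implicit function theorem, namely checking that the finite-dimensional functions $\phi_j$ are genuinely $C^1$. One must argue that the existence of \emph{continuous} first variations of the $\mathcal{G}^j$ throughout a neighborhood of $\mathbf{y}$ — not just at $\mathbf{y}$ itself — makes the partial derivatives $\partial\phi_j/\partial\varepsilon_k=\delta\mathcal{G}^j(\hat{\mathbf{y}};\mathbf{h}^k)$ continuous in $\varepsilon$, so that $\phi_j$ is continuously differentiable and the IFT is applicable. This is precisely where the continuity hypothesis on the first variations is consumed, and it is the only delicate analytic point; the remainder is linear algebra together with the chain rule.
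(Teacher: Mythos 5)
First, a point of order: the paper contains no proof of Theorem~\ref{nes:iso} at all. It is quoted as a known abstract multiplier rule from \cite[Theorem~5.16]{Trout}, and only its consequence (Theorem~\ref{Th:B:EL-CV}) is proved in the paper. So your proposal can only be compared with the textbook argument, and judged on that basis it is the standard proof and, in essence, Troutman's own: negate the degeneracy alternative (i) to obtain directions $\mathbf{h}^1,\ldots,\mathbf{h}^r$ for which $M=\bigl[\delta\mathcal{G}^j(\mathbf{y};\mathbf{h}^k)\bigr]$ is nonsingular; reduce to finitely many parameters via $(\varepsilon_0,\ldots,\varepsilon_r)\mapsto \mathcal{G}^j\bigl(\mathbf{y}+\sum_k\varepsilon_k\mathbf{h}^k\bigr)$; solve the constraints near the origin by the implicit function theorem (this is indeed exactly where the hypothesis of \emph{continuous} first variations in a whole neighborhood is consumed, and your discussion of why the reduced functions are $C^1$ is the right one --- note only that the same $C^1$ argument must also be applied to $\mathcal{J}$ itself, so that the chain rule may be used on $\varepsilon_0\mapsto\mathcal{J}(\hat{\mathbf{y}})$); then define $\lambda$ by the $r\times r$ linear system with matrix $M$ and verify the identity for the arbitrary direction $\mathbf{h}^0$. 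The crucial point, that the multipliers so constructed do not depend on $\mathbf{h}^0$, is handled correctly by your algebra.

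There is, however, one place where a parenthetical remark of yours hides a genuine issue rather than settling it: the clause ``the $\mathbf{h}^k$ are chosen to preserve admissibility (so that \dots they vanish at the endpoints).'' The negation of (i), as the theorem is stated, only supplies $\mathbf{h}^k\in\mathbf{D}$ with no endpoint conditions, whereas your construction requires $\mathbf{h}^k(a)=\mathbf{h}^k(b)=0$ so that $\hat{\mathbf{y}}$ satisfies \eqref{boun2}; symmetrically, your minimality argument forces $\mathbf{h}^0(a)=\mathbf{h}^0(b)=0$, so you obtain alternative (ii) only for such directions, not ``for all $\mathbf{h}\in\mathbf{D}$.'' This mismatch cannot be patched, because with the boundary conditions \eqref{boun2} imposed and directions ranging over all of $\mathbf{D}$ the stated dichotomy is false already in the classical limit ($\gamma=1$, $\alpha\to 1$): take $\mathcal{J}(y)=\int_0^1 (y'(x))^2dx$ and $\mathcal{G}^1(y)=\int_0^1 y(x)dx$ with $y(0)=0$, $y(1)=1$, $l_1=0$; the constrained minimizer $y(x)=3x^2-2x$ satisfies neither (i) (take $h^1\equiv 1$) nor (ii) (taking $h\equiv 1$ forces $\lambda_1=0$, and then $h(x)=x$ gives $2=0$). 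The version that is true --- the one proved in \cite{Trout}, where boundary data are either absent or enter as additional constraint functionals carrying their own multipliers, and the only version the paper actually invokes in proving Theorem~\ref{Th:B:EL-CV}, where $\mathbf{h}(a)=\mathbf{h}(b)=0$ is assumed --- quantifies both (i) and (ii) over directions vanishing at the endpoints. Your argument is the right proof of that corrected statement; you should say so explicitly and restrict the directions throughout, rather than leaving the restriction implicit in a parenthesis while the displayed statement quantifies over all of $\mathbf{D}$.
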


Note that condition (ii) of Theorem~\ref{nes:iso}
can be written in the form
\begin{equation}
\label{lmiii}
\delta \left(\mathcal{J}(\mathbf{y};\mathbf{h})
-\sum_{j=1}^{r}\lambda_{j}\mathcal{G}^{j}(\mathbf{y};\mathbf{h})\right)=0.
\end{equation}

Suppose now that assumptions of Theorem~\ref{nes:iso} hold but
condition (i) does not hold. Then, equation \eqref{lmiii} is
fulfilled for every $\mathbf{h} \in \mathbf{D}$. Let us consider
function $\mathbf{h}$ such that $\mathbf{h}(a)=\mathbf{h}(b)=0$
and denote by $\mathcal{F}$ the functional
$$
\mathcal{J}-\sum_{j=1}^{r}\lambda _{j}\mathcal{G}^{j}.
$$
Then, we have
\begin{equation*}
\begin{split}
0 &= \delta \mathcal{F}(\mathbf{y};\mathbf{h})
=\frac{\partial}{\partial \varepsilon}\mathcal{F}(\mathbf{y}
+\varepsilon\mathbf{h})|_{\varepsilon=0}\\
&=\int_a^b\Biggl[\sum_{i=2}^{N+1}\partial_iF\{\mathbf{y}\}_\lambda(x)h_{i-1}(x)
+\sum_{i=2}^{N+1}\partial_{N+i}F\{\mathbf{y}\}_\lambda(x){{^CD}^{\alpha,\beta}_{\gamma}}
h_{i-1}(x)\Biggr]dx,
\end{split}
\end{equation*}
where the function $F:[a,b]\times \R^{2N}\times \R^r \rightarrow \R$
is defined by
$$
F\{\mathbf{y}\}_\lambda(x)=L[\mathbf{y}](x)
-\sum_{j=1}^{r}\lambda_{j} G^{j}[\mathbf{y}](x).
$$
On account of the above, and similarly in spirit
to the proof of Theorem~\ref{Theo E-L1}, we obtain
\begin{equation}
\label{ele}
\partial_iF\{\mathbf{y}\}_\lambda(x)+\opbac
\partial_{N+i}F\{\mathbf{y}\}_\lambda(x)=0,
\quad i=2,\ldots N+1.
\end{equation}
Therefore, we have the following necessary optimality condition for
the isoperimetric problem:

\begin{theorem}
\label{Th:B:EL-CV}
Let assumptions of Theorem~\ref{nes:iso} hold.
If $\mathbf{y}$ is a local minimizer to the
isoperimetric problem \eqref{Funct1},\eqref{boun2} and
\eqref{cons2:iso}, and condition \eqref{lmi} does not hold,
then $\mathbf{y}$ satisfies the system of $N$ fractional
Euler--Lagrange equations \eqref{ele} for all $x\in[a,b]$.
\end{theorem}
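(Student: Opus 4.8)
The plan is to obtain the result as a Lagrange-multiplier consequence of the abstract alternative in Theorem~\ref{nes:iso}, followed by the same integration-by-parts-plus-fundamental-lemma argument already used for the basic problem in Theorem~\ref{Theo E-L1}. First I would observe that, since by hypothesis the determinant condition \eqref{lmi} fails, alternative (i) of Theorem~\ref{nes:iso} is excluded, so alternative (ii) must be in force: there exist constants $\lambda_1,\ldots,\lambda_r\in\R$ with $\delta\mathcal{J}(\mathbf{y};\mathbf{h})=\sum_{j=1}^{r}\lambda_j\,\delta\mathcal{G}^j(\mathbf{y};\mathbf{h})$ for every $\mathbf{h}\in\mathbf{D}$. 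Introducing the augmented Lagrangian $F=L-\sum_{j=1}^{r}\lambda_j G^j$ and the functional $\mathcal{F}=\mathcal{J}-\sum_{j=1}^{r}\lambda_j\mathcal{G}^j$, this condition is precisely $\delta\mathcal{F}(\mathbf{y};\mathbf{h})=0$ for all $\mathbf{h}\in\mathbf{D}$, i.e. the form \eqref{lmiii}.

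Next I would specialize to variations satisfying $\mathbf{h}(a)=\mathbf{h}(b)=0$, which are admissible because they respect the boundary data \eqref{boun2}. Differentiating under the integral sign (legitimate since $L,G^j\in C^1$) and using the linearity of ${{^CD}^{\alpha,\beta}_{\gamma}}$, the condition $\delta\mathcal{F}(\mathbf{y};\mathbf{h})=0$ becomes
\begin{equation*}
\int_a^b\sum_{i=2}^{N+1}\Bigl[\partial_iF\{\mathbf{y}\}_\lambda(x)\,h_{i-1}(x)+\partial_{N+i}F\{\mathbf{y}\}_\lambda(x)\,{{^CD}^{\alpha,\beta}_{\gamma}}h_{i-1}(x)\Bigr]dx=0.
\end{equation*}
Applying the fractional integration-by-parts formula \eqref{byparts} to each term carrying ${{^CD}^{\alpha,\beta}_{\gamma}}h_{i-1}$ transfers the fractional derivative onto $\partial_{N+i}F$ via the operator $\opbac$; the boundary contributions, namely $\gamma\,[h_{i-1}\intra\partial_{N+i}F]$ and $(1-\gamma)[-h_{i-1}\intlb\partial_{N+i}F]$, vanish at $x=a$ and $x=b$ precisely because $h_{i-1}(a)=h_{i-1}(b)=0$.

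What survives is $\int_a^b\sum_{i=2}^{N+1}\bigl[\partial_iF\{\mathbf{y}\}_\lambda(x)+\opbac\partial_{N+i}F\{\mathbf{y}\}_\lambda(x)\bigr]h_{i-1}(x)\,dx=0$ for all such $\mathbf{h}$. Choosing variations supported in a single component at a time and invoking the fundamental lemma of the calculus of variations componentwise then yields the system \eqref{ele} pointwise on $[a,b]$. The whole computation runs in parallel with the proof of Theorem~\ref{Theo E-L1}, the genuinely new ingredient being only the multiplier reduction supplied by Theorem~\ref{nes:iso}.

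Accordingly, the step I expect to require the most care is the very first one: ensuring that alternative (ii) is the operative branch. This rests on the standing hypothesis that $\mathcal{J},\mathcal{G}^1,\ldots,\mathcal{G}^r$ have continuous first variations in a neighborhood of $\mathbf{y}$, so that Theorem~\ref{nes:iso} is applicable, together with the assumption that \eqref{lmi} does not hold, which jointly force the existence of the $\lambda_j$. Once the multipliers are secured, the vanishing of the boundary terms and the appeal to the fundamental lemma are entirely routine, being a verbatim repetition of the unconstrained argument.
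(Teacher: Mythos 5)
Your proposal is correct and follows essentially the same route as the paper: excluding alternative (i) of Theorem~\ref{nes:iso} to secure the multipliers, forming $F = L - \sum_{j=1}^{r}\lambda_j G^j$, restricting to variations vanishing at the endpoints, and then repeating the integration-by-parts and fundamental-lemma argument of Theorem~\ref{Theo E-L1}. The only difference is cosmetic: the paper compresses the final step into the phrase ``similarly in spirit to the proof of Theorem~\ref{Theo E-L1},'' whereas you spell it out explicitly.
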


Suppose now that constraints \eqref{cons2:iso}
are characterized by inequalities
\begin{equation*}
\mathcal{G}^j(\mathbf{y})
=\int_{a}^{b} G^j[\mathbf{y}](x)dx\leq l_j,
\quad j=1,\ldots,r.
\end{equation*}
In this case we can set
\begin{equation*}
\int_{a}^{b}\left(G^j[\mathbf{y}](x)-\frac{l_j}{b-a}\right)dx
+\int_{a}^{b}(\phi_j(x))^2dx=0,
\end{equation*}
$j=1,\ldots,r$, where $\phi_j$ have the same continuity
properties as $y_i$. Therefore, we obtain the following problem:
\begin{equation*}
\hat{\mathcal{J}}(y)=\int_a^b \hat{L}(x,\mathbf{y}(x),{{^CD}^{\alpha,\beta}_{\gamma}}
\mathbf{y}(x), \mathbf{\phi}(x)) \, dx \longrightarrow \min
\end{equation*}
where $\mathbf{\phi}(x)=[\phi_1(x),\ldots,\phi_r(x)]$,
subject to $r$ isoperimetric constraints
\begin{equation*}
\int_{a}^{b}\left[G^j[\mathbf{y}](x)
-\frac{l_j}{b-a}+(\phi_j(x))^2\right]dx=0,
\quad j=1,\ldots,r,
\end{equation*}
and boundary conditions \eqref{boun2}. Assuming that assumptions of
Theorem~\ref{Th:B:EL-CV} are satisfied, we conclude that there exist
constants $\lambda_{j}\in \R$, $j=1,\ldots,r$, for which the
system of equations
\begin{multline}
\label{iso:1:L:EL}
\opbac \partial_{N+i}F(x,\mathbf{y}(x),{{^CD}^{\alpha,\beta}_{\gamma}}
\mathbf{y}(x),\lambda_1,
\ldots,\lambda_r,\mathbf{\phi}(x))\\
+\partial_iF(x,\mathbf{y}(x),{{^CD}^{\alpha,\beta}_{\gamma}}
\mathbf{y}(x),\lambda_1,\ldots,\lambda_r,\mathbf{\phi}(x))=0,
\end{multline}
$i=2,\ldots, N+1$, where
$F=\hat{L}+\sum_{j=1}^r\lambda_j(G^j-\frac{l_j}{b-a}+\phi_j^2)$ and
\begin{equation}
\label{iso:2:L:EL}
\lambda_j\phi_j(x)=0,  \quad j=1,\ldots,r,
\end{equation}
hold for all $x\in[a,b]$. Note that it is enough to assume that the
regularity condition holds for the constraints which are active at
the local minimizer $\mathbf{y}$. Indeed, suppose that $l<r$ constrains,
say $\mathcal{G}^1,\ldots,\mathcal{G}^l$ for simplicity, are active
at the local minimizer $\mathbf{y}$, and
\begin{equation*}
\left|
  \begin{array}{cccc}
    \delta\mathcal{G}^{1}(\mathbf{y};\mathbf{h}^{1})
    & \delta\mathcal{G}^{1}(\mathbf{y};\mathbf{h}^{2})
    & \cdots & \delta\mathcal{G}^{1}(\mathbf{y};\mathbf{h}^{l})\\
    \delta\mathcal{G}^{2}(\mathbf{y};\mathbf{h}^{1})
    & \delta\mathcal{G}^{2}(\mathbf{y};\mathbf{h}^{2})
    & \cdots & \delta\mathcal{G}^{2}(\mathbf{y};\mathbf{h}^{l}) \\
    \vdots & \vdots& \ddots & \vdots \\
    \delta\mathcal{G}^{l}(\mathbf{y};\mathbf{h}^{1})
    & \delta\mathcal{G}^{l}(\mathbf{y};\mathbf{h}^{2})
    & \cdots & \delta\mathcal{G}^{l}(\mathbf{y};\mathbf{h}^{l})
  \end{array}
\right|\neq 0
\end{equation*}
for (independent) $\mathbf{h}^{j}\in \mathbf{D}$, $j=1,\ldots,l$.
Since the inequality constraints
$\mathcal{G}^{l+1},\ldots,\mathcal{G}^r$ are inactive, the
conditions \eqref{iso:2:L:EL} are trivially satisfied by taking
$\lambda_{l+1}=\cdots=\lambda_{r}=0$. On the other hand, since the
inequality constraints $\mathcal{G}^1,\ldots,\mathcal{G}^l$ are
active and satisfy a regularity condition at $\mathbf{y}$, the
conclusion that there exist constants $\lambda_{j}\in \R$,
$j=1,\ldots,r$, such that \eqref{iso:1:L:EL} holds follow from
Theorem~\ref{Th:B:EL-CV}. Moreover, \eqref{iso:2:L:EL} is trivially
satisfied for $j=1,\ldots,l$.


\section{Conclusion}
\label{Con}

The FC is a mathematical area of a currently strong research, with
numerous applications in physics and engineering. The fractional
operators are non-local, therefore they are suitable for
constructing models possessing memory effect. This gives several
possible applications of the FCV in describing non-local properties
of physical systems in mechanics or electrodynamics. In this note we
extend the notions of Caputo fractional derivative to the
fractional derivative ${{^CD}^{\alpha,\beta}_{\gamma}}$.
We emphasize that this derivative
allows to describe a more general class of variational problems
and, as a particular case, we get the results of \cite{agrawalCap}.

Knowing the importance and relevance of multiobjective problems of the
calculus of variations in physics and engineering, our further
research will continue towards multiobjective FCV. This is a
completely open research area and will be addressed elsewhere.


\section*{Acknowledgements}

This work was partially presented at the
\emph{IFAC Workshop on Fractional Derivatives and Applications} (IFAC FDA'2010),
held in University of Extremadura, Badajoz, Spain, October 18-20, 2010.
It was supported by {\it FEDER} funds through
{\it COMPETE} --- Operational Programme Factors of Competitiveness
(``Programa Operacional Factores de Competitividade'')
and by Portuguese funds through the
{\it Center for Research and Development
in Mathematics and Applications} (University of Aveiro)
and the Portuguese Foundation for Science and Technology
(``FCT --- Funda\c{c}\~{a}o para a Ci\^{e}ncia e a Tecnologia''),
within project PEst-C/MAT/UI4106/2011
with COMPETE number FCOMP-01-0124-FEDER-022690.
The first author was also supported by Bia{\l}ystok
University of Technology grant S/WI/02/2011.



\bigskip \smallskip

\it

\noindent
$^1$ Faculty of Computer Science \\
Bia{\l}ystok University of Technology \\
15-351 Bia\l ystok, POLAND  \\[4pt]
e-mail: abmalinowska@ua.pt \\[12pt]
$^2$ Center for Research and Development in Mathematics and Applications \\
Department of Mathematics, University of Aveiro \\
3810-193 Aveiro, PORTUGAL \\[4pt]
e-mail: delfim@ua.pt


\end{document}